\title{A Characterization of Saturated Designs for Factorial Experiments}
\author{Roberto Fontana \\ Department of Mathematical Sciences, Politecnico di Torino
\and Fabio Rapallo \\ Department DISIT, Universit\`a del Piemonte
Orientale \and Maria Piera Rogantin \\ Department of Mathematics,
Universit\`a di Genova }
\date{}
\newtheorem{theorem}{Theorem}[section]
\newtheorem{definition}{Definition}[section]
\newtheorem{proposition}{Proposition}[section]
\newtheorem{remark}{Remark}[section]
\newtheorem{example}{Example}[section]
\newcommand{\design}{{\mathcal D}}
\newcommand{\fraction}{{\mathcal F}}
\begin{document}

\maketitle

\begin{abstract}
In this paper we study saturated fractions of factorial designs under the perspective of Algebraic Statistics. We define a criterion to check whether a fraction is saturated or not with respect to a given model. The proposed criterion is based purely on  combinatorial objects. Our technique is particularly useful when several fractions are needed. We also show how to generate random saturated fractions with given projections, by applying the theory of Markov bases for contingency tables.

\noindent
\emph{Keywords:} Estimability; Linear model; Circuits; Graver basis; Universal Markov basis.
\end{abstract}

\section{Introduction}

The search for minimal designs to estimate linear models is an active research area
in the design of experiments. Given a model, saturated fractions
have a minimum number of points to estimate all the parameters of a given model.
As a consequence, all information is used to estimate the parameters and there is no degree of freedom to
estimate the error term. Nevertheless, saturated fractions are of
common use in sciences and engineering, and they become
particularly useful for highly expensive experiments, or when time
limitations impose the choice of the minimum possible number of
design points. For general reference in the design of experiments,
the reader can refer to \cite{raktoeetal:81} and \cite{bailey:08},
where the issue of saturated fractions is discussed.


In this paper we characterize saturated fractions of a factorial design
in terms of the circuits of their design matrices and
we define a criterion to check whether a given fraction is saturated or not.
This avoids computating the determinant of the corresponding design matrix.

Our work falls within the framework of Algebraic statistics. The application of polynomial
algebra to the design of experiments was originally presented in
\cite{pistoneetal:01}, but with a different point of view. The
techniques used here are mainly based on the combinatorial and algebraic objects associated to the design matrix of the model, such as the circuits, the Graver basis and the Universal Gr\"obner basis. These algebraic objects are different bases of the toric ideal of the design matrix, which are a special set of polynomials originally used in Statistics for the analysis contingency tables. All these bases are used to solve enumeration problems, to make non-asymptotic
inference, and to describe the geometric structure of the
statistical models for discrete data. A recent account of this
theory can be found in \cite{drtonetal:09}.

In this paper, we benefit from the interplay between algebraic
techniques for the analysis of contingency tables and some topics
of the design of experiments. We identify
a factorial design with a binary contingency table whose entries are the
indicator function of the fraction, i.e., they are equal to $1$
for the fraction points and $0$ for the other points. This implies that a
fraction can also be considered as a subset of cells of the table.
Some recent results in this direction
can be found in \cite{aoki|takemura:10}. The connections between
experimental designs and contingency tables have also been
explored in \cite{fontanaetal:12}, but were limited to the
investigation of enumerative problems in the special cases of
contingency tables arising from the Sudoku problems.

The basic idea underlying our theory is as follows. A fraction is saturated if a null linear combination of its points with non-zero coefficients exists. This vector of coefficients belongs to the kernel of the transpose of the design matrix. IN algebraic language this means generating bases of the toric ideal associated to the design matrix. Thus, each null linear combination with integer coefficients translates into a binomial of the toric ideal of the design matrix. In this paper we prove that if we consider a special basis of the toric ideal, namely the circuit basis, this is also a sufficient condition. Our approach based on circuits avoids the computation of the determinant of the design matrix, and therefore it avoids possible numerical problems. It is particularly useful when we need to find several saturated fractions for the same model as in the case of algorithms for optimal design generation. Furthermore, the circuits can be computed once and for all from the design matrix of the full factorial model and do not depend on the fraction.

The paper is organized along these lines. In Section \ref{sat-frac} we
set some notations and we state the problem. In Section
\ref{Alg-Stat} we provide the basic algebraic framework to be used
in the paper. In Section \ref{main-res} we prove the main result,
showing that the absence of circuits is a necessary and sufficient
condition for obtaining a saturated fraction. In Section
\ref{sec:comp}, for unimodular design matrices, we report results which highlight the relationship between the different bases of the relevant toric ideal, and we show that for several models the design matrix is unimodular. In Section \ref{sect:ex} we provide some examples to demonstrate the practical applicability of our theory. In Section \ref{sect:gen} we show how to generate a sample of saturated fractions, by applying the theory of Markov bases when we add constraints to the projections. Finally, in Section \ref{fut-dir} we suggest some future research directions
stimulated by the theory presented here.

\section{Saturated designs} \label{sat-frac}

Let $\design$ be a full factorial design with $d$ factors, $A_1,
\ldots, A_d$ with $s_1, \ldots , s_d$ levels respectively,
$\design= \{0,\dots,s_1-1\}
\times \cdots \times  \{0,\dots,s_d-1\}$. We consider a linear model
on $\design$:
\begin{equation*}
Y = X \beta + \varepsilon \, ,
\end{equation*}
where $Y$ is the response variable, $X$ is the {\em design
matrix}, $\beta$ is the vector of model parameters, and
$\varepsilon$ is a vector of random variables that
represent the error terms. We denote by $p$ the number of
estimable parameters.

For instance, in a two-factor design with the simple effect model,
we have $p=s_1+s_2-1$ and a possible design matrix is:
\begin{equation}   \label{mat-repr}
X= \left( m_0 \ | \ a_0 \ | \ \ldots \ | \ a_{s_1-2} \ | \ b_0 \ | \
\ldots \ | \ b_{s_2-2}  \right) \, ,
\end{equation}
where $m_0$ is a column vector of $1$'s, $a_0, \ldots, a_{s_1-2}$
are the indicator vectors of the first $(s_1-1)$ levels of the
factor $A_1$, and $b_0, \ldots, b_{s_2-2}$ are the indicator vectors
of the first $(s_2-1)$ levels of the factor $A_2$.

A subset $\fraction$, or fraction, of a full design $\design$,
with minimal cardinality $\#\fraction=p$, that allows us to
estimate the model parameters, is a saturated fraction or \emph{saturated design}. By
definition, the design matrix $X_\fraction$ of a saturated design
is a non-singular matrix with dimensions $p \times p$.

\begin{example} \label{first-ex}
Let us consider the $2^4$ design and the model with simple effects and 2-way interactions. This example is at the same time not trivial and easy to handle, so that we will use it as the running example in this paper. The design matrix $X$ of the full design has rank equal to $11$ and is reported in Figure \ref{ex_fraction}.
\begin{figure}
\begin{footnotesize}
\begin{equation*}
X=\bordermatrix{&1 &a_0 &b_0 & c_0& d_0& a_0b_0 & a_0c_0 & a_0d_0 & b_0c_0 & b_0d_0 & c_0d_0 \cr
                (0,0,0,0)&1 & 1 & 1 & 1 & 1 & 1 & 1 & 1 & 1 & 1 & 1 \cr
                (0,0,0,1)&1 & 1 & 1 & 1 & 0 & 1 & 1 & 0 & 1 & 0 & 0 \cr
                (0,0,1,0)&1 & 1 & 1 & 0 & 1 & 1 & 0 & 1 & 0 & 1 & 0 \cr
                (0,0,1,1)&1 & 1 & 1 & 0 & 0 & 1 & 0 & 0 & 0 & 0 & 0 \cr
                (0,1,0,0)&1 & 1 & 0 & 1 & 1 & 0 & 1 & 1 & 0 & 0 & 1 \cr
                (0,1,0,1)&1 & 1 & 0 & 1 & 0 & 0 & 1 & 0 & 0 & 0 & 0 \cr
                (0,1,1,0)&1 & 1 & 0 & 0 & 1 & 0 & 0 & 1 & 0 & 0 & 0 \cr
                (0,1,1,1)&1 & 1 & 0 & 0 & 0 & 0 & 0 & 0 & 0 & 0 & 0 \cr
                (1,0,0,0)&1 & 0 & 1 & 1 & 1 & 0 & 0 & 0 & 1 & 1 & 1 \cr
                (1,0,0,1)&1 & 0 & 1 & 1 & 0 & 0 & 0 & 0 & 1 & 0 & 0 \cr
                (1,0,1,0)&1 & 0 & 1 & 0 & 1 & 0 & 0 & 0 & 0 & 1 & 0 \cr
                (1,0,1,1)&1 & 0 & 1 & 0 & 0 & 0 & 0 & 0 & 0 & 0 & 0 \cr
                (1,1,0,0)&1 & 0 & 0 & 1 & 1 & 0 & 0 & 0 & 0 & 0 & 1 \cr
                (1,1,0,1)&1 & 0 & 0 & 1 & 0 & 0 & 0 & 0 & 0 & 0 & 0 \cr
                (1,1,1,0)&1 & 0 & 0 & 0 & 1 & 0 & 0 & 0 & 0 & 0 & 0 \cr
                (1,1,1,1)&1 & 0 & 0 & 0 & 0 & 0 & 0 & 0 & 0 & 0 & 0 \cr }
\end{equation*}
\end{footnotesize}
 \caption{The design matrix $X$ of the model in Example \ref{first-ex}.} \label{ex_fraction}
\end{figure}
As the matrix $X$ has rank $11$, we search for fractions with $11$ points. For instance the fraction
\begin{equation*}
\begin{split}
\fraction_1 = \{ (0,0,0,0),
(0,0,0,1),(0,0,1,1),(0,1,0,1),(0,1,1,0),(1,0,0,1),\\ (1,0,1,0),
(1,1,0,0),(1,1,0,1),(1,1,1,0),(1,1,1,1) \}
\end{split}
\end{equation*}
is saturated while
\begin{equation*}
\begin{split}
\fraction_2 = \{(0,0,0,0),
(0,0,0,1),(0,0,1,1),(0,1,0,0),(0,1,1,0),(1,0,0,1),\\ (1,0,1,0),
(1,1,0,0),(1,1,0,1),(1,1,1,0),(1,1,1,1) \}
\end{split}
\end{equation*}
is not. A direct computation shows that there are $\binom{16}{11}=4,368$
fractions with $11$ points: among them $3,008$ are saturated, and the
remaining $1,360$ are not.
\end{example}

\section{Designs and contingency tables with Algebraic Statistics}
\label{Alg-Stat}

As mentioned in the Introduction, we identify a factorial design
with a contingency table whose entries are the indicator function
of the fraction. In the previous example, $\fraction_1$ is also a
$2^4$ table with $1$ in the cells identified by the point coordinates of $\fraction_1$ and $0$
otherwise as in Table \ref{tabella}. To avoid misunderstandings, $\fraction$ denotes the fraction, while $N(\fraction)$ denotes the corresponding binary table. In Table \ref{tabella} the contingency table
representations of the fractions $\fraction_1$ and $\fraction_2$
from Example \ref{first-ex} are displayed.

\begin{table}
\begin{center}
\begin{tabular}{ccc}
$N(\fraction_1)$ & \qquad &
\begin{tabular}{c|c|cccc} \hline
& & \multicolumn{2}{c}{$A_1=0$} & \multicolumn{2}{c}{$A_1=1$} \\ \hline
 & & $A_2=0$ & $A_2=1$ & $A_2=0$ & $A_2=1$ \\ \hline
 \multirow{2}{*}{$A_3=0$} & $A_4=0$ & 1 & 1 & 0 & 1 \\
  & $A_4=1$ & 1 & 0 & 1 & 1 \\
 \multirow{2}{*}{$A_3=1$} & $A_4=0$ & 0 & 1 & 1 & 1 \\
  & $A_4=1$ & 1 & 0 & 0 & 1 \\ \hline
\end{tabular}
\end{tabular}
\qquad
\begin{tabular}{ccc}
$N(\fraction_2)$ & \qquad &
\begin{tabular}{c|c|cccc} \hline
 & & \multicolumn{2}{c}{$A_1=0$} & \multicolumn{2}{c}{$A_1=1$} \\ \hline
 & & $A_2=0$ & $A_2=1$ & $A_2=0$ & $A_2=1$ \\ \hline
 \multirow{2}{*}{$A_3=0$} & $A_4=0$ & 1 & 0 & 0 & 1 \\
 & $A_4=1$ & 1 & 1 & 1 & 1 \\
 \multirow{2}{*}{$A_3=1$} & $A_4=0$ & 0 & 1 & 1 & 1 \\
 & $A_4=1$ & 1 & 0 & 0 & 1 \\ \hline
\end{tabular}
\end{tabular}
\caption{$4$-way contingency tables $N(\fraction_1)$ and $N(\fraction_2)$ from Example \ref{first-ex}.}
\label{tabella}
\end{center}
\end{table}

Such an identification leads us to use the Algebraic Statistics
tools from both contingency tables and Design of Experiments
theories.

In order to give a complete account of our theory and to present
our algorithm with full details, some
definitions and a few basic facts concerning the algebraic
representation of contingency tables and the combinatorial
properties of some statistical models are reported. For the Algebraic
Statistics notions, the reader can refer to \cite{drtonetal:09} or
\cite{pistoneetal:01}. For details on Algebraic notions, see
\cite{coxetal:92} and \cite{kreuzer|robbiano:00},

Given a contingency table with $K$ cells, we consider the
polynomial ring ${\mathbb R}[x] = {\mathbb R}[x_1, \ldots, x_K]$
of all polynomials with indeterminates $x_1, \ldots, x_K$ and real
coefficients. The relevant polynomial ring has one indeterminate
for each cell of the table (or equivalently, for each point of the
design). An ideal ${\mathcal I}$ in ${\mathbb R}[x]$ is a subset
of ${\mathbb R}[x]$ such that $f+g \in {\mathcal I}$ for all $f,g
\in {\mathcal I}$ and $fg \in {\mathcal I}$ for all $f \in
{\mathcal I}$ and for all $g \in {\mathbb R}[x]$.

In Algebraic Statistics, a class of ideals is of special interest.
Given a $s \times K$ non-negative matrix $A$ with integer entries,
the toric ideal defined by $A$ is the binomial ideal
\begin{equation*}
{\mathcal I}_A = \left\{x^a - x^b \ : \ Aa = Ab \right\}
\end{equation*}
where the monomials $x^a$ are written in vector notation $x^a= x_1^{a_1} \cdots x_K^{a_K}$.

Thanks to the Hilbert's basis theorem, every ideal has a finite
basis $\{f_1, \ldots, f_n\}$: for all $f \in {\mathcal I}$ there
are polynomials $g_1, \ldots, g_n \in {\mathbb R}[x]$ such that
$f=g_1f_1 + \cdots + g_nf_n$. The computation of a system of
generators of an ideal is a non trivial task in Computer Algebra.
An actual way to do that is to compute the reduced Gr\"obner basis
of the ideal. The computation of a Gr\"obner basis depends on the
term-order chosen in the polynomial ring ${\mathbb R}[x]$, but for
a given term-order the reduced Gr\"obner basis is unique and can
be computed through symbolic software.

Among all term-orders, the elimination term-order for a given
indeterminate, say $x_K$, leads to the Gr\"obner basis of the
projection ${\rm Elim}(x_K ; {\mathcal I}) := {\mathcal I} \cap
{\mathbb R}[x_1, \ldots, x_{K-1}]$, just taking the Gr\"obner
basis of ${\mathcal I}$ and removing the polynomials involving
$x_K$. Applying this fact iteratively, Theorem 4 in
\cite{rapallo:06} shows that the statistical counterpart of
elimination of indeterminates is the definition of a statistical
model for incomplete tables.

As there are finitely many term-orders, there are finitely many
Gr\"obner bases.

\begin{definition}
Let ${\mathcal I}_A$ be a toric ideal. The union of all reduced
Gr\"obner bases of ${\mathcal I}_A$ is called the {\em Universal
Gr\"obner basis} ${\mathcal U}_A$ of ${\mathcal I}_A$.
\end{definition}

The computation of the Universal Gr\"obner basis is unfeasible for
most ideals, but fortunately there are special algorithms for doing
that in the case of toric ideals.

There are now several computer systems for handling multivariate
polynomials, see for instance \cite{cocoa} and \cite{singular:12},
and all of them compute Gr\"obner bases. Recently, also the {\tt
R} package {\tt mpoly} has been implemented, see \cite{mpoly:13}.
For toric ideals the fastest algorithms are implemented in {\tt
4ti2}, see \cite{4ti2}.

Together with the Universal Gr\"obner basis, there are other
combinatorial and polynomial objects derived from a nonnegative
integer matrix $A$.

\begin{definition}
A binomial $f = x^a-x^b \in {\mathcal I}_A$
is primitive if there is no binomial $g = x^c-x^d \in {\mathcal
I}_A$, with $g \ne f$, such that $c \leq a$ and $d \leq b$. The
{\em Graver basis} $Gr_A$ of $A$ is the set of all primitive
binomials in ${\mathcal I}_A$.
\end{definition}

\begin{definition}
The support of a binomial $f = x^a-x^b$ is the set of indices $i$
($i=1, \ldots , K$) such that $a(i) \ne 0$ or $b(i) \ne 0$. We
denote the  support of $f$ with ${\rm supp}(f)$.
\end{definition}

\begin{definition}
An irreducible binomial $f = x^a-x^b \in {\mathcal I}_A$ is a
circuit if there is no other binomial $g \in {\mathcal I}_A$ such
that ${\rm supp}(g) \subset {\rm supp}(f)$ and ${\rm supp}(g) \ne {\rm supp}(f)$. We
denote the set of all circuits of ${\mathcal I}_A$ with ${\mathcal
C}_A$.
\end{definition}

It is easy to see that every circuit is primitive. Moreover,
Theorem $1.1$ in \cite{ohsugi:12} states that
\begin{equation} \label{inclus}
{\mathcal C}_A \subseteq {\mathcal U}_A \subseteq Gr_A \, .
\end{equation}
We will come back on such inclusions later in Section \ref{sec:comp}.

\begin{remark}
The notion of Universal Markov basis in Statistics for contingency
tables has been already introduced in \cite{rapallo|rogantin:07} and \cite{rapallo|yoshida:10} for the analysis of bounded tables. The circuits are shown to be relevant objects for finding all the supports of the probability distributions in the closure of an exponential family for finite sample spaces, see \cite{rauhetal:11}.
\end{remark}

\section{Characterization of saturated designs} \label{main-res}

We are now ready to state the main result, exploiting the
connections between the saturated fractions and the circuits of
the design matrix. Recall that the support of a binomial
$f=x^u-x^v$ is the set of indices for which $u>0$ or $v>0$.

In the algebraic theory of toric ideals it is common to use the transpose of the design matrix $X$ in place of the design matrix $X$. In order to simplify the notation, we denote by $A=X^t$ the transpose of the design
matrix, and with a slight abuse of notation we call it {\it design matrix}. As a consequence, given ${\mathcal F}=\left\{i_1, \ldots, i_p \right\}$, $A_{\mathcal F}$ is the submatrix of $A$ obtained by selecting the columns of $A$ according to $\fraction$. Note that each column of $A$ identifies a design point, and therefore the definition of a set of column-indices is equivalent to the definition of the fraction with the corresponding design points.

\begin{theorem} \label{theo1}
Let $A$ be a full-rank design matrix with dimensions $p \times K$
and let ${\mathcal C}_A=\{f_1 , \ldots, f_L\}$ be the set of its
circuits. Given a set ${\mathcal F}$ of $p$ column-indices of $A$, the
submatrix $A_{\mathcal F}$ is non-singular if and only if $
{\mathcal F}$ does not contains any of the supports ${\rm
supp}(f_1), \ldots , {\rm supp}(f_L)$.
\end{theorem}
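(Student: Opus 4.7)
The plan is to prove both implications by relating circuits to integer vectors in $\ker(A)$ with support-minimal sign pattern. The bridge is the standard identification: a binomial $f = x^u - x^v \in \mathcal{I}_A$ corresponds to the vector $u - v \in \ker(A) \cap \mathbb{Z}^K$, and when $u$, $v$ have disjoint supports (i.e.\ $f$ is irreducible) one has $\mathrm{supp}(f) = \mathrm{supp}(u-v)$.

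For the ``only if'' direction (equivalent to: if $\mathcal{F}$ contains the support of some circuit, then $A_{\mathcal{F}}$ is singular), I would take a circuit $f_i = x^{u_i} - x^{v_i}$ with $\mathrm{supp}(f_i) \subseteq \mathcal{F}$. The vector $z := u_i - v_i$ is a nonzero integer vector in $\ker(A)$ whose support lies inside $\mathcal{F}$. Restricting $z$ to the coordinates indexed by $\mathcal{F}$ yields a nonzero vector $w$ with $A_{\mathcal{F}} w = A z = 0$, so $A_{\mathcal{F}}$ is singular. This is the easy direction.

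For the ``if'' direction I would proceed by contrapositive: assume $A_{\mathcal{F}}$ is singular and produce a circuit whose support sits inside $\mathcal{F}$. Since $A_{\mathcal{F}}$ is a square singular matrix, there is a nonzero $w \in \ker(A_{\mathcal{F}})$; because $A$ has integer entries we can clear denominators and take $w$ integer. Padding with zeros outside $\mathcal{F}$ gives a nonzero $z \in \ker(A) \cap \mathbb{Z}^K$ with $\mathrm{supp}(z) \subseteq \mathcal{F}$. Splitting $z = z^+ - z^-$ into its positive and negative parts (which have disjoint supports) produces an irreducible binomial $f = x^{z^+} - x^{z^-} \in \mathcal{I}_A$ with $\mathrm{supp}(f) = \mathrm{supp}(z) \subseteq \mathcal{F}$.

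The remaining step, which I expect to be the main obstacle to articulate cleanly, is a descent argument from the arbitrary binomial $f$ down to a circuit contained in $\mathcal{F}$. If $f$ is already a circuit we are done; otherwise, by the definition of circuit, there exists $g \in \mathcal{I}_A$ with $\mathrm{supp}(g) \subsetneq \mathrm{supp}(f)$. After dividing $g$ by any common monomial factor of its two terms we may again assume $g$ is irreducible, and then replace $f$ by $g$. Each iteration strictly shrinks the support (a finite subset of $\{1,\ldots,K\}$), so the process terminates at some irreducible binomial $f^* \in \mathcal{C}_A$ with $\mathrm{supp}(f^*) \subseteq \mathrm{supp}(f) \subseteq \mathcal{F}$. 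This contradicts the hypothesis that $\mathcal{F}$ contains no circuit support, completing the proof.
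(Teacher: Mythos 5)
Your proof is correct. The easy direction (a circuit support inside $\mathcal{F}$ yields a nonzero integer vector killed by $A_{\mathcal F}$) coincides with the paper's. For the hard direction the skeleton is the same --- pass from singularity of $A_{\mathcal F}$ to a nonzero integer vector of $\ker A$ supported in $\mathcal F$, hence to an irreducible binomial of $\mathcal{I}_A$ supported in $\mathcal F$, and then descend to a circuit --- but you bridge the last gap differently. The paper routes through elimination theory: it identifies $\mathcal{I}_{A_{\mathcal F}}$ with an elimination ideal of $\mathcal{I}_A$, extracts a binomial from the Gr\"obner basis for the elimination term order, places it in the Universal Gr\"obner basis $\mathcal{U}_A$, and then asserts without further argument that a non-circuit binomial admits a circuit with strictly smaller support. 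You bypass the Gr\"obner machinery entirely and run a support-descent instead: repeatedly replace a non-circuit irreducible binomial by one of strictly smaller support (restoring irreducibility by dividing out the common monomial factor at each step), which terminates because supports are finite subsets of $\{1,\ldots,K\}$ that only shrink and remain inside $\mathcal F$. Your route is more elementary and more self-contained --- the descent is precisely the justification of the step the paper leaves implicit --- at the cost of not exhibiting the link to elimination ideals that the paper exploits elsewhere. The only convention worth making explicit is that binomials of $\mathcal{I}_A$ are nonzero (i.e.\ $x^u - x^v$ with $u \neq v$), so the support never descends to the empty set; with that understood, the terminal binomial of your descent is irreducible and support-minimal, hence a circuit, and the argument is complete.
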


\begin{proof}
We prove that $A_{\mathcal F}$ is singular if and only if there is
a binomial $f$ in ${\mathcal C}_A$ with ${\rm supp}(f) \subseteq
{\mathcal F}$.

``$\Rightarrow$'': If $A_{\mathcal F}$ is singular, then there is
a null linear combination of its columns. As the entries of $A_{\mathcal
F}$ are nonnegative integers, the linear combination has
coefficients in ${\mathbb Q}$, and hence there is a linear
combination with coefficients in ${\mathbb Z}$.

Therefore, there exists a non-zero vector $m \in {\mathbb Z}^p$
with $A_{\mathcal F}m=0$. Using the positive part of $m$, $m^+=\max\{m,0\}$, and the negative part $m^-=-\min(m,0)$, we decompose $m=m^+-m^-$, and the binomial $x^{m^+}-x^{m^-}$ belongs to
${\mathcal I}_{A_{\mathcal F}}$ the toric ideal associated to $A_\fraction$.

As ${\mathcal I}_{A_{\mathcal F}}$ can be computed from ${\mathcal I}_A$ with the elimination algorithm as described in Section \ref{Alg-Stat}, and in particular ${\mathcal I}_{A_{\fraction}} = {\rm Elim}(x_i \ : \ i = 1, \ldots, K, i \notin {\mathcal F} ;  {\mathcal I}_A)$, ${\mathcal I}_{A_{\mathcal F}}$ is non-empty and there is a
binomial in the Gr\"obner basis ${\mathcal G}_{A,\tau}$, where
$\tau$ is the elimination term order for the indeterminates $(x_i
\ : \ i=1, \ldots, K, i \notin {\mathcal F})$. By definition of Universal Gr\"obner
basis, this implies that there is a binomial $g$ in ${\mathcal
U}_A$ with support in ${\mathcal F}$.

If the binomial $g$ is a circuit, the proof if complete. If not,
there do exist a circuit $h \in {\mathcal C}_A$ with ${\rm supp}(h)
\subset {\rm supp}(g)$ and it is enough to choose such binomial $h$.

``$\Leftarrow$'': Suppose that there is a circuit $f \in {\mathcal
C}_A$ with support in ${\mathcal F}$, i.e., $x^{m^+}-x^{m^-}$ is
in ${\mathcal I}_{A}$. Hence, $Am=0$ and projecting onto the
subspace ${\mathbb R}[x_i \ : \ i \in {\mathcal F}]$ we obtain
$A_{\mathcal F}m=0$, i.e., $A_{\mathcal F}$ is singular.
\end{proof}

Theorem \ref{theo1} replaces a linear algebra condition (namely the non-singularity of the design matrix) with a combinatorial property for checking weather a fraction is saturated or not. This technique highlight an interesting combinatorial property of the saturated fractions, with a theoretical interest. It may be of limited practical interest when analyzing a single fraction, but it becomes useful when we need to study all the saturated fractions of a given factorial design. Note that the set of the circuits ${\mathcal C}_A$ of a design matrix can be computed once for all fractions.

We are now ready to analyze and discuss some examples. To ease the
presentation, the binomials are defined through their exponents,
so that a nonnegative vector $a-b$ is used in place of
$x^a-x^b$. For instance the binomial $x_1x_3^2-x_2x_7x_8$ in ${\mathbb R}[x_1, \ldots, x_8]$ is written as $(1,-1,2,0,0,0,-1,-1)$. This notation is the standard one in {\tt 4ti2}, the
software we use for our computations.

\begin{example} \label{remake}
We consider again the $2^4$ design and the model with simple effects and
2-way interactions, already discussed in Example
\ref{first-ex}. In less than one second, {\tt 4ti2} produces the
list of all $140$ circuits of the design matrix. Labeling the
design points lexicographically, the $140$ circuits can be divided
into three classes, up to permutations of factors or levels:
\begin{itemize}
\item 20 circuits of the form
\begin{equation} \label{firstkind}
f_1=(0,0,0,0,1,-1,-1,1,-1,1,1,-1,0,0,0,0)
\end{equation}
with $\max|f_1| = 1$;
\item 40 circuits of the form
\begin{equation} \label{secondkind}
f_2=(1,-2,0,1,0,1,-1,0,0,1,-1,0,-1,0,2,-1)
\end{equation}
with $\max|f_2| = 2$;
\item 80 circuits of the form
\begin{equation} \label{thirdkind}
f_3=(1,0,-2,1,0,-1,1,0,-2,1,3,-2,1,0,-2,1)
\end{equation}
with $\max|f_3| = 3$.
\end{itemize}

The supports of these circuits are displayed in Figure
\ref{tab_supporti}. Note that the support of $f_2$ is contained in the
fraction ${\mathcal F}_2$ of Example \ref{first-ex}, making that fraction non-saturated.
\begin{table}
\begin{center}
\begin{tabular}{c}
\begin{tabular}{c|c|cccc} \hline
& & \multicolumn{2}{c}{$A_1=0$} & \multicolumn{2}{c}{$A_1=1$} \\
\hline
 & & $A_2=0$ & $A_2=1$ & $A_2=0$ & $A_2=1$ \\ \hline
 \multirow{2}{*}{$A_3=0$} & $A_4=0$ &  & $\bullet$ & $\bullet$ &  \\
  & $A_4=1$ &   & $\bullet$ & $\bullet$ &  \\
 \multirow{2}{*}{$A_3=1$} & $A_4=0$ &  & $\bullet$ & $\bullet$ &  \\
  & $A_4=1$ &  & $\bullet$ & $\bullet$ &  \\ \hline
\end{tabular} \\
\begin{tabular}{c|c|cccc} \hline
& & \multicolumn{2}{c}{$A_1=0$} & \multicolumn{2}{c}{$A_1=1$} \\
\hline
 & & $A_2=0$ & $A_2=1$ & $A_2=0$ & $A_2=1$ \\ \hline
 \multirow{2}{*}{$A_3=0$} & $A_4=0$ & $\bullet$ &  &  & $\bullet$ \\
 & $A_4=1$ & $\bullet$ & $\bullet$ & $\bullet$ &  \\
 \multirow{2}{*}{$A_3=1$} & $A_4=0$ &  & $\bullet$ & $\bullet$ & $\bullet$ \\
 & $A_4=1$ & $\bullet$ &  &  & $\bullet$ \\ \hline
\end{tabular} \\
\begin{tabular}{c|c|cccc} \hline
& & \multicolumn{2}{c}{$A_1=0$} & \multicolumn{2}{c}{$A_1=1$} \\
\hline
 & & $A_2=0$ & $A_2=1$ & $A_2=0$ & $A_2=1$ \\ \hline
 \multirow{2}{*}{$A_3=0$} & $A_4=0$ & $\bullet$ &  & $\bullet$ & $\bullet$ \\
  & $A_4=1$ &  & $\bullet$ & $\bullet$ &  \\
 \multirow{2}{*}{$A_3=1$} & $A_4=0$ & $\bullet$ & $\bullet$ & $\bullet$ & $\bullet$ \\
  & $A_4=1$ & $\bullet$ & & $\bullet$ & $\bullet$ \\ \hline
\end{tabular}
\end{tabular}

\caption{The supports of the three circuits in Example
\ref{remake}. The bullet symbol denotes the cells in the support} \label{tab_supporti}
\end{center}
\end{table}
\end{example}

\begin{remark}
Theorem \ref{theo1} allows us to identify saturated designs with the
feasible solutions of an integer linear programming problem. Let
$\overline{{\mathcal C}}_A=(c_{ij}, i=1,\ldots,L, j=1,\ldots,K)$ be
the $L \times K$ matrix, whose rows contain the values of the
indicator functions of the supports of the circuits $f_1, \ldots,
f_L$, $c_{ij}=(f_{ij} \ne 0) , i=1,\ldots,L, j=1,\ldots,K$ and
$Y=(y_1,\ldots,y_K)$ be the $K$-dimensional column vector that contains the
unknown values of the indicator function of the points of ${\mathcal
F}$. The vector $Y$ must satisfy the following conditions:
\begin{eqnarray*}
\overline{{\mathcal C}}_A Y < b, \\
{1}_K^t Y =p , \\
y_i \in \{0,1\}
\end{eqnarray*}
where $b=(b_1, \ldots, b_L)$ is the column vector defined by
$b_i=\#{\rm supp}(f_i), i=1,\ldots , L$, and ${1}_K$ is the
column vector of length $K$ and whose entries are all equal to $1$.
\end{remark}

\section{Computational remarks} \label{sec:comp}

We have introduced in Section \ref{Alg-Stat} three different
objects associated with the design matrix of a model, namely
the circuits, the Graver basis and the Universal Gr\"obner basis.
In general strict inclusions among them holds, as stated in Eq. \eqref{inclus}. Therefore, it is interesting to find models for which such three sets
coincide. For instance, one may have the Graver basis theoretically determined, and in such case no further computations are needed.

The basic definition for investigating this issue is the following
one.

\begin{definition} \label{unimod:def}
A nonnegative integer matrix with rank $p$ is {\it unimodular} if all its non-zero $p \times p$ minors are equal to $\pm 1$.
A nonnegative integer matrix is {\it totally unimodular} if all its non-zero minors are equal to $\pm 1$.
\end{definition}

Of course, a totally unimodular matrix is unimodular. It follows immediately from Definition \ref{unimod:def} that the entries of a totally unimodular
matrix are $0$ and $1$, that each submatrix of a totally
unimodular matrix is again totally unimodular, and the transpose of a totally unimodular matrix is again totally unimodular. A couple of less intuitive properties are collected in the following proposition.

\begin{proposition} \label{unimod:char}
Let $A$ be a $0-1$ matrix with dimensions $p \times K$.
\begin{enumerate}
\item If for each subset $J$ of columns of $A$, there is a partition $\{J_1, J_2\}$ of $J$ such that
\begin{equation*}
\left| \sum_{j \in J_1} A_{i,j} - \sum_{j \in J_2} A_{i,j} \right| \leq 1 \ \mbox{ for } \ i=1, \ldots, p \, ,
\end{equation*}
then $A$ is totally unimodular. In particular, if each row contains at most $2$ non-zero entries, then $A$ is totally unimodular.

\item All matrices obtained by pivot operations on a totally unimodular matrix are totally unimodular.
\end{enumerate}
\end{proposition}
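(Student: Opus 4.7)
The main statement of part~1 is (a restriction of) Ghouila--Houri's classical characterisation of totally unimodular matrices, so I would follow the standard minimum-counterexample strategy. Suppose $A$ is not totally unimodular and let $B$ be a smallest $k\times k$ square submatrix with $|\det(B)|\ge 2$. By minimality every proper square submatrix of $B$ has determinant in $\{-1,0,1\}$, so the adjugate $B^{*}$ of $B$ has entries in $\{-1,0,1\}$; in particular its first column $b^{*}$ satisfies $B\,b^{*}=\det(B)\,e_{1}$ with $b^{*}\in\{-1,0,1\}^{k}$. I would then apply the partition hypothesis to the column set $J=\mathrm{supp}(b^{*})$ to obtain a partition $\{K_{1},K_{2}\}$ of $J$, encoded as $y\in\{-1,0,1\}^{k}$ with $By\in\{-1,0,1\}^{k}$.

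The crux is to combine $b^{*}$ and $y$: setting $w=(b^{*}+y)/2$ and $w'=(b^{*}-y)/2$, a parity check (using $|(By)_{i}|\le 1$ and $(Bb^{*})_{i}=0$ for $i\ne 1$) shows $w,w'\in\{-1,0,1\}^{k}$, and one finds $Bw=c\,e_{1}$, $Bw'=c'\,e_{1}$ for integers $c,c'$ with $c+c'=\det(B)$ and $|c|,|c'|<|\det(B)|$. Invertibility of $B$ then forces $w=(c/\det(B))\,b^{*}$, and $|c/\det(B)|<1$ together with integrality of $w$ forces $c=0$ and hence $w=0$; symmetrically $w'=0$, and combining the two identities yields $b^{*}=0$, contradicting $B\,b^{*}=\det(B)\,e_{1}\ne 0$. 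The main obstacle lies in orchestrating the parity and integrality checks so that both $w$ and $w'$ really do land in $\{-1,0,1\}^{k}$; once this is done the rest is routine. The \emph{in particular} clause then reduces, for a fixed column subset $J$, to $2$-colouring the graph on $J$ whose edges are the rows of $A$ having exactly two non-zeros in $J$: such a colouring (which is exactly the required partition) exists whenever this graph is bipartite, and then the main statement applies.

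For part~2 I would write the pivot $A\mapsto A'$ on entry $a_{pq}\ne 0$ in closed form as $a'_{pq}=1/a_{pq}$, $a'_{pj}=-a_{pj}/a_{pq}$ for $j\ne q$, $a'_{iq}=a_{iq}/a_{pq}$ for $i\ne p$, and $a'_{ij}=a_{ij}-a_{iq}a_{pj}/a_{pq}$ otherwise. The key lemma is a Tucker-type identity $\det A'[I,J]=\pm\det A[I'',J'']/a_{pq}$, where $I''$ and $J''$ are obtained from $I, J$ by toggling the inclusion of $p$ and $q$ respectively; this identity follows from a Schur-complement / block-matrix expansion. Since $A$ is totally unimodular, the non-zero entry $a_{pq}$ lies in $\{-1,+1\}$, so the factor $1/a_{pq}$ is $\pm 1$, and every submatrix determinant of $A'$ inherits the $\{-1,0,1\}$ bound from $A$. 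The delicate point is bookkeeping the sign and the index toggling in the Tucker identity; once that is in place the conclusion is immediate.
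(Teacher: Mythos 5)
The paper does not actually prove this proposition: it is presented as a collection of standard facts with a pointer to Schrijver, Chapters 19 and 20, so there is no in-paper argument to compare against. Your proposal is the classical textbook route for both items (Ghouila--Houri via a minimal non-totally-unimodular submatrix and its adjugate column, and Tucker's pivoting identity), and the delicate step you single out in part 1 does go through: $b^{*}$ and $y$ are both supported on $J$ and take values $\pm 1$ there, so $(b^{*}\pm y)/2$ is integral, while for $i\neq 1$ the congruence $(By)_{i}\equiv(Bb^{*})_{i}=0 \pmod 2$ together with $|(By)_{i}|\leq 1$ forces $(By)_{i}=0$, which is exactly what makes $Bw$ and $Bw'$ multiples of $e_{1}$. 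Part 2 is likewise a correct sketch of the standard pivoting lemma, with $a_{pq}=\pm 1$ supplied by the total unimodularity of $A$.

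The one genuine problem sits exactly where you hedged. Your reduction of the ``in particular'' clause works only ``whenever this graph is bipartite'', and that gap cannot be closed, because the clause is false as stated: the vertex--edge incidence matrix of a triangle,
\[
\begin{pmatrix} 1&1&0\\ 0&1&1\\ 1&0&1\end{pmatrix},
\]
has exactly two non-zero entries in every row and determinant $2$. The correct statement needs the additional hypothesis that the columns admit one global $2$-colouring separating the two $1$'s of every row (equivalently, that your graph built on the full column set is bipartite); such a colouring restricts to every subset $J$, and then the main statement applies. That stronger hypothesis is what actually holds for the matrix $\tilde X$ of Eq.~\eqref{mat-repr-2}, the only place the paper invokes the clause: its columns split into the $a$-block and the $b$-block, and each row has at most one $1$ in each. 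So the defect is in the statement you were asked to prove rather than in your strategy; you should either add the bipartiteness hypothesis explicitly or prove only the version needed for $\tilde X$.
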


For the theory of totally unimodular
matrices, the reader can refer to \cite{schrijver:86}, Chapters 19
and 20. The following result can be found in \cite{sturmfels:96}, page 70.

\begin{proposition} \label{unimod-res}
If $A$ is a unimodular matrix, then
\begin{equation}
{\mathcal C}_A = {\mathcal U}_A = Gr_A \, .
\end{equation}
\end{proposition}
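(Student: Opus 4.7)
The plan is to establish the single missing inclusion $Gr_A \subseteq \mathcal{C}_A$, since $\mathcal{C}_A \subseteq \mathcal{U}_A \subseteq Gr_A$ is already recorded in \eqref{inclus}.

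First I would show that unimodularity forces every circuit to have entries in $\{-1,0,1\}$. If $c = x^{u^+} - x^{u^-}$ is a circuit, then by minimality of the support $S = \mathrm{supp}(u)$ the submatrix $A_S$ has a one-dimensional kernel, whose primitive integer generator has coordinates given (up to a common sign) by the maximal minors of $A_S$ via Cramer's rule. Unimodularity forces each such minor to lie in $\{-1,0,1\}$, so $u \in \{-1,0,1\}^K$.

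Next I would attack $Gr_A \subseteq \mathcal{C}_A$ by contradiction. Suppose $f = x^{m^+} - x^{m^-} \in Gr_A$ is primitive but not a circuit. Then $\mathrm{supp}(m)$ strictly contains the support of some circuit $g = x^{v^+} - x^{v^-}$, and by the previous step $v$ has entries in $\{-1,0,1\}$. If one can choose the sign of $g$ so that $v^+ \leq m^+$ and $v^- \leq m^-$ componentwise, then $x^{v^+}$ divides $x^{m^+}$ and $x^{v^-}$ divides $x^{m^-}$, directly contradicting primitivity of $f$.

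The delicate step is this last sign alignment: an arbitrary circuit with support inside $\mathrm{supp}(m)$ need not be conformal with $m$, so a better choice of circuit is needed. I would invoke the standard conformal decomposition lemma for unimodular matrices, available in \cite{sturmfels:96} (the reference from which the statement of the proposition is quoted): every $m \in \ker_{\mathbb{Z}}(A)$ admits a nonnegative integer expansion $m = \sum_i \lambda_i c_i$ into circuits with $c_i^+ \leq m^+$ and $c_i^- \leq m^-$. Applied to a primitive $m$, this expansion must collapse to a single term, yielding $m = c_1 \in \mathcal{C}_A$. The main obstacle is precisely this decomposition lemma, which is the only place where unimodularity enters nontrivially; a self-contained proof would go by induction on $\|m\|_1$, at each step peeling off a conformally-signed circuit extracted by a Cramer-rule argument applied to the columns indexed by $\mathrm{supp}(m)$ and checking that the remainder stays in the same sign orthant as $m$.
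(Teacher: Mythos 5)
The paper does not actually prove this proposition: it is quoted from \cite{sturmfels:96}, page 70, so there is no internal argument to compare yours against. Your sketch reconstructs, essentially correctly, the standard proof from that source: show that unimodularity forces every circuit to have entries in $\{0,\pm 1\}$, then use a conformal decomposition into circuits to conclude that every primitive element of $\ker_{\mathbb Z}(A)$ is itself a circuit, which closes the chain of inclusions \eqref{inclus}. The overall architecture is right, and you correctly identify the sign-alignment issue as the point where a naive ``take any circuit with smaller support'' argument breaks down. Two steps still need tightening before this is a complete proof.

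First, your Cramer's-rule argument is applied to the wrong submatrix. If $S=\mathrm{supp}(c)$ for a circuit $c$, the kernel generator of $A_S$ is given by signed maximal minors of $A_S$, which are $r\times r$ minors with $r=\mathrm{rank}(A_S)$, and $r$ may be strictly smaller than $p$; Definition \ref{unimod:def} only controls the nonzero $p\times p$ minors. The standard fix is to enlarge $S$ to a set $T\supseteq S$ of $p+1$ columns with $\mathrm{rank}(A_T)=p$: then $\ker(A_T)$ is one-dimensional, spanned by the vector of signed $p\times p$ minors of $A_T$, which lies in $\{0,\pm 1\}^{p+1}$ and has coprime entries, so the circuit (padded with zeros on $T\setminus S$) equals it up to sign. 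Second, the conformal decomposition you invoke is neither special to unimodular matrices nor an integer decomposition in general: for an arbitrary integer matrix, every $m\in\ker_{\mathbb Z}(A)$ is a \emph{nonnegative rational} combination of circuits lying in the same sign orthant as $m$ (\cite{sturmfels:96}, Chapter 4). Rational coefficients are all you need: pick any circuit $c_1$ appearing in the decomposition; conformality together with $c_1\in\{0,\pm 1\}^K$ gives $c_1^+\le m^+$ and $c_1^-\le m^-$, and primitivity of $m$ then forces $m=c_1$. In particular, unimodularity enters only through the $\{0,\pm 1\}$ bound on circuits, not through the decomposition lemma, contrary to your closing remark; reassigning the burden that way also spares you the inductive peeling argument you propose, which is the more delicate (and unnecessary) route.
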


In view of above properties, we study the first non-trivial model,
i.e., the no-$d$-way interaction model for $d$ factors. All other
model matrices are submatrices of this matrix. For $d=2$, we reduce to
the independence model, and it is known that the design matrix of the independence model is totally
unimodular for arbitrary $s_1$ and $s_2$. Indeed, an alternative parametrization of the no-2-way interaction for two factors uses the following design matrix
\begin{equation}   \label{mat-repr-2}
\tilde X= \left( \ a_0 \ | \ \ldots \ | \ a_{s_1-1} \ | \ b_0 \ | \
\ldots \ | \ b_{s_2-2}  \right) \, ,
\end{equation}
in place of the design matrix in Eq. \eqref{mat-repr} discussed in Section \ref{sat-frac}. $\tilde X$ satisfies the hypotheses of the first item in Proposition \ref{unimod:char}.
To move to higher dimensions, we need to work with Lawrence
liftings. Given a matrix $A$, its Lawrence lifting is the block
matrix defined by
\begin{equation}
\Lambda(A) = \begin{pmatrix} A & I \\
0 & I \\
\end{pmatrix} \, ,
\end{equation}
where $I$ is the identity matrix and $0$ is a null matrix with
suitable dimensions.

As argued in \cite{sturmfels:96}, the Lawrence lifting of a totally unimodular
matrix is totally unimodular as well, and in \cite{ohsugi:12}, Section 4.1 is proved the
following fact: given a no-$d$-way interaction model for a $s_1
\times \cdots \times s_d$ design with design matrix $A$, the
no-$(d+1)$-interaction model for the $s_1 \times \cdots \times s_d
\times 2$ is the Lawrence lifting $\Lambda(A)$ of $A$. This property is derived with details in \cite{ohsugi|hibi:10}.

Combining all the facts in the discussion above, the following
theorem is proved.

\begin{theorem}
The design matrix of all models for a $s_1
\times s_2 \times 2 \times \cdots \times 2$ factorial designs is totally unimodular.
\end{theorem}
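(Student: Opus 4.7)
The plan is to proceed by induction on the total number $d$ of factors, using the two preservation properties recalled just before the theorem — Lawrence lifting and column-submatrix extraction both preserve total unimodularity — together with the observation, stated in the same discussion, that every model matrix on a given factorial design is a column-submatrix of the design matrix of the no-$d$-way interaction model.

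For the base case $d=2$, I consider an $s_1\times s_2$ design. In the reduced parametrization $\tilde X$ of Eq.~\eqref{mat-repr-2}, the row indexed by cell $(i,j)$ has exactly one $1$ among $a_0,\ldots,a_{s_1-1}$ (in column $a_i$) and at most one $1$ among $b_0,\ldots,b_{s_2-2}$ (in column $b_j$, namely when $j<s_2-1$). Every row of $\tilde X$ therefore has at most two nonzero entries, so Proposition~\ref{unimod:char}(1) yields total unimodularity of $\tilde X$ directly. The standard parametrization $X$ of Eq.~\eqref{mat-repr} differs from $\tilde X$ only by replacing $a_{s_1-1}$ with $m_0 = a_0+\cdots+a_{s_1-1}$, a change realizable by a pivot operation, so item~(2) of the same proposition transfers total unimodularity to $X$ as well.

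For the inductive step, I assume the design matrix $A$ of the no-$d$-way interaction model on $s_1\times s_2\times 2\times\cdots\times 2$ (with $d-2$ two-level factors) is totally unimodular. By the identification from \cite{ohsugi:12} recalled in the excerpt, adjoining one further two-level factor produces the no-$(d+1)$-way interaction model on the corresponding larger design, whose design matrix is precisely the Lawrence lifting $\Lambda(A)$. Since $\Lambda$ preserves total unimodularity, the induction closes and the design matrix of the no-$d$-way interaction model on $s_1\times s_2\times 2\times\cdots\times 2$ is totally unimodular for every $d\geq 2$.

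To extend the conclusion from the no-$d$-way interaction model to \emph{every} model on the design, I invoke the observation from the paragraph preceding the theorem: every other model matrix is obtained by deleting from the no-$d$-way interaction design matrix the columns corresponding to the omitted interaction terms. Because submatrices of totally unimodular matrices are totally unimodular, every such model matrix is totally unimodular. The main point requiring care is the base case, where one must select the parametrization exhibiting the at-most-two-nonzeros-per-row pattern and then appeal to Proposition~\ref{unimod:char}(2) to transfer the property to any equivalent parametrization of interest; the rest of the argument is a direct chaining of the preservation results already established.
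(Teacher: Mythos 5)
Your proof follows essentially the same route as the paper: the base case via the reduced parametrization $\tilde X$ and Proposition~\ref{unimod:char}(1), the inductive step via Lawrence liftings for each additional two-level factor, and the passage to arbitrary models by taking column-submatrices of the no-$d$-way interaction matrix. The only addition is your explicit remark on transferring the property to the standard parametrization $X$, a point the paper leaves implicit.
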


The simplest model with a non unimodular design matrix is the
no-3-way interaction model for the $3 \times 3 \times 3$
factorial design. Under the name of ``transportation problem'',
this model is fully discussed in \cite{sturmfels:96}, page 150.
However in this case the circuits and the Graver basis still
coincide. The simplest model where circuits and Graver basis
differ is the no-3-way interaction model for the $3 \times 3
\times 4$ factorial design. The results are presented in the next
section.

To compute the circuits and the Graver basis of our running
example and of the examples in the next section, we used the
commands {\tt circuits} and {\tt graver} in {\tt 4ti2},
\cite{4ti2}. All computations were carried out in less than 2
seconds on a standard PC. As usual in Computer Algebra, the
computational complexity increases quickly as the number of
indeterminates rises and it depends heavily on the degrees of
freedom of the model. For instance, the design $2^5$ under the
model with simple effects and 2-way interactions is a computationally unfeasible problem on a standard PC.

Finally, notee that the elements of the bases (Graver or
circuits) are the coefficients of the linear combination of
fraction points to have zero as result, and therefore a singular
matrix. Therefore, the elements of the bases with more than $p$
values different from zero are not of interest of our aims. In our
running examples, the circuits of the third kind like the circuit
$f_3$ in Eq. \eqref{thirdkind} can be excluded, and in the
following examples this situation comes up in other few cases.

\section{Examples} \label{sect:ex}

In this section we briefly describe the results of our
computations for some classical models.

\begin{itemize}
\item Design $2^5$; model with simple factors and 2-way and 3-way
interactions.

The saturated model has $26$ points. Circuit basis and Graver
basis are equal. They contain $3,254$ elements that can be divided
into $12$ classes, up to permutations of factors or levels. All of
them have support cardinality less than $26$ (and the maximum
cardinality is just $26$).

\item Design $2^5$; model with simple factors.

The saturated model has $6$ points. Also in this case circuit
basis and Graver basis are equal. The circuits are $353,616$
elements that can be divided into $38$ classes, up to permutations
of factors or levels. There are $259,904$ circuits with support
cardinality equal to 7, and therefore the circuits to be checked
in our algorithm are $93,712$ with support cardinality ranging
from 4 to 6.

\item Design $2\times 3 \times 4$; model with simple factors and
2-way interactions.

The saturated model has $18$ points. Circuit basis and Graver
basis are equal, and they contain $42$ elements that can be
divided into two classes. One class of $24$ elements has support
cardinality $12$, and another class of $18$ elements has support
cardinality $8$. All of them are needed for the algorithm.

\item Design $3\times 3 \times 4$; model with simple factors and
2-way interactions.

The saturated model has $24$ points. Circuit basis and Graver
basis are different. The latter has $19,722$ elements that can be
divided into $20$ classes, while the former has $17,994$ elements
with $19$ classes, all included in the Graver basis.  Both bases
contain $15,302$ elements with support cardinality smaller than or
equal to $24$. A configuration in the Graver basis which is not a
circuit is:
\begin{multline*}
(-1,1,1-1,1,0,0,-1,0,-1,-1,2,0,-1,0,1,-1,2,-1,\\0,1,-1,1,-1,1,0,-1,0,0,-2,1,1,-1,2,0,-1)
\, ;
\end{multline*}
All the elements in the Graver basis but not in the circuit basis
are permutations of this configuration.
\end{itemize}

\section{Generation of random saturated fractions} \label{sect:gen}

When a sequence of saturated fractions is needed, instead of a
single saturated fraction, an algorithm for finding such fractions
without computing the determinant of the design matrix of each
fraction may be useful.

As a first application, one can generate random fractions with $p$
points and then check whether they are saturated or not simply by
comparing the selected fractions with the circuit basis. For
instance, we have implemented that procedure for the model in our
running example, and we are able to generate $5,000$ random
saturated fractions in about 1 second on a standard PC, by
executing few lines of code. For our simulations, we used {\tt R},
see \cite{r-project}.

However, it is interesting to take a deeper look into the
connections between Markov bases and saturated fractions.
Therefore, we show how to apply the theory of Markov bases to
generate saturated fractions with given projections. Indeed, the
combinatorial objects needed to check whether a fraction is
saturated or not are essentially the same as those needed to
define a Markov Chain Monte Carlo sampler.

Remember that we have already identified a fraction with a binary
contingency table. Given a fraction $\fraction$, the corresponding
table is $N({\mathcal F})$, where $N({\mathcal
F})_{i_1,\ldots,i_d}=1$ if $(i_1,\ldots, i_d)$ is a point of
$\fraction$ and $N({\mathcal F})_{i_1,\ldots,i_d}=0$ otherwise.
Using Algebraic statistics tools, we are able to generate all
fractions with given margins, or projections, through a Markov chain algorithm
following the theory in \cite{rapallo|rogantin:07}. Moreover, we
can merge that algorithm with our theory in order to select only
the saturated fractions with given margins. Interestingly, the
Algebraic and Combinatorial objects needed to define the relevant
Markov chain for navigating the set of all fractions with given
margins and for checking whether the fraction is saturated are
very close.

Basic facts about Markov bases are reported. The reader
can refer to the book \cite{drtonetal:09} for a complete
presentation. A \emph{Markov move} $m$ is a table with integer
entries such that $N({\mathcal F})$, $N({\mathcal F})+m$ and
$N({\mathcal F})-m$ have the same margins. A \emph{Markov basis}
${\mathcal M}$ is a finite set of Markov moves which makes
connected the set of all tables, or designs, with the same
margins. Notice that by adding Markov moves to a Markov basis
yields again a Markov basis.

In standard problems involving contingency tables, Theorem $3.1$
in \cite{diaconis|sturmfels:98} states that an actual way for
writing a Markov basis of a design matrix $A$ is as follows.
Compute a Gr\"obner basis of the toric ideal ${\mathcal I}_A$
(w.r.t. an arbitrary term-order) and then define the moves by
taking the logarithms of the binomials ($p^a-p^b\mapsto a-b$) in the Gr\"obner basis.

As observed in \cite{rapallo|rogantin:07} and
\cite{rapallo|yoshida:10}, when the cells of the table are
bounded, we need a special Markov basis, namely the Universal
Markov basis, derived from the Universal Gr\"obner basis and
taking logarithms. In our problem, the relevant tables are bounded
as each entry can be only $0$ or $1$. Therefore, in what follows
we consider the moves of the Universal Markov basis.

If we start from a fraction with matrix $N({\mathcal F}_0)$, the
Markov chain is then built as follows:
\begin{itemize}
\item at each step $i$, randomly choose a Markov move $m$ in
${\mathcal M}$ and a sign $\varepsilon \in \{ \pm 1 \}$;

\item if $N({\mathcal F}_i)+ \varepsilon m$ is a binary table,
move the chain to $N({\mathcal F}_{i+1}) = N({\mathcal F}_i)+
\varepsilon m$; otherwise, stay in $N({\mathcal F}_i)$.
\end{itemize}
The Markov chain described above is a connected chain over all the
designs with fixed margins, and its stationary distribution is the
uniform one. By considering the classical Metropolis-Hastings
probability ratio, one can define a Markov chain converging to any
specified probability distribution, see
\cite{diaconis|sturmfels:98}.

Now, a set of saturated fractions can be extracted from the Markov
chain above, by comparing each fraction with the supports of the relevant circuits, as described in Section
\ref{main-res}, and discarding the non-saturated fractions.

Two computational remarks are now in order: (a) the Universal
Gr\"obner basis coincide with the set of circuits, and the
Universal Markov basis does not need new computations; (b) due to
the limitation to $0-1$ tables, we can discard all the moves with
values outside $\{-1,0,1\}$ as they do not produce valid tables in
the algorithm above.

\begin{example}
We now reconsider the $2^4$ design with simple effects and 2-way
interactions, already illustrated in the previous sections.
Starting from the fraction ${\mathcal F}$ in Example
\ref{first-ex}, we are interested in saturated fractions with the
same one-way projections. The Universal Gr\"obner basis for this
problem coincides with the circuits basis and consists of $1,348$
elements, $532$ of them have values in $\{-1,0,1\}$ and are needed
to define the Markov chain. Taking such set of moves as input, we
are able to produce a sequence of fractions with the same one-way
projections as $\fraction_1$. Comparing each fraction with the
supports of the $120$ circuits found in Example \ref{first-ex}, we
have a random sample of saturated fractions. In less than $1$
minute, the execution of a simple {\tt R} function yields a sample
of $5,000$ saturated fractions with the desired projection.
\end{example}

\section{Conclusions} \label{fut-dir}

The theory described in this paper suggests several extensions and
applications. Firstly, it is interesting to explore how the results
can be extended for the characterization of saturated fractions to
more general designs.

Secondly, the connections between fractions and graphs need to be
studied. Indeed, it is known that the circuits for two-factor
designs can be derived by the complete bipartite graph associated
with the design, but little is known in the general case.

It would also be interesting to study the classification
of the saturated fractions with respect to some statistical
criteria. Among these criteria, we cite the minimum aberration in
a classical sense, or more recent tools, such as state polytopes.
Minimum aberration is a classical notion in this framework, and is
supported by a large amount of literature. This theory has been
developed in \cite{fries|hunter:80} and more recently in, e.g.,
\cite{ye:03} with the use of the indicator function in the
two-level case. The extension to the multilevel case is currently
an open problem. The use of state polytopes has been introduced in
\cite{bersteinetal:10}.

For applications, it would be interesting
to define an algorithm to optimize the search of saturated
fractions using the circuit basis in the construction of the
fractions instead of checking a random fraction.

Finally, the use of the inequivalent saturated fractions to
perform exact tests on model parameters is worth studying,
together with its implementation in statistical softwares, such as
SAS or {\tt R}. For inequivalent orthogonal arrays very
interesting results have already been achieved, see \cite{basso}
and \cite{arbfonrag}.

\bibliographystyle{spmpsci}
\bibliography{refsfrr}

{\bf Acknowledgment.} RF acknowledges SAS institute for providing software. FR is partially supported by the PRIN2009 grant number 2009H8WPX5.

\end{document}